\numberwithin{equation}{section}
\newcommand{\margnote}[1]{
\ifthenelse{\boolean{shownotes}}%
{\marginpar{\raggedright\tiny\texttt{#1}}}%
{}%
}
\newcommand{\hole}[1]{
\ifthenelse{\boolean{shownotes}}%
{\begin{center} \fbox{ \rule {.25cm}{0cm}
\rule[-.1cm]{0cm}{.4cm} \parbox{.85\textwidth}{\begin{center}
\texttt{#1}\end{center}} \rule {.25cm}{0cm}}\end{center}}
{}
}
\theoremstyle{plain}
\newtheorem{defi}{Definition}[section]
\newtheorem{theo}{Theorem}[section]
\newtheorem{lema}{Lemma}
\newtheorem{rem}{Remark}
\theoremstyle{remark}
\theoremstyle{remark}
\begin{document}

\title[]{Hyperbolic systems of quasilinear equations in compressible fluid dynamics with an objective Cattaneo-type extension for the heat flux}

\author[F. Angeles]{Felipe Angeles}
 
\address{{\rm (F. Angeles)} Instituto de Matem\'aticas\\Universidad Nacional Aut\'{o}noma 
de M\'{e}xico\\Circuito Exterior s/n, Ciudad de M\'{e}xico C.P. 04510 (Mexico)}

\email{teojkd@ciencias.unam.mx}

\begin{abstract}
We consider the coupling between the equations of motion of an inviscid compressible fluid in space with an objective Cattaneo-type extension for the heat flux. These equations are written in quasilinear form and we determine which of the given formulations for the heat flux allows for the hyperbolicity of the system. This feature is necessary for a physically acceptable sense of well-posedness for the Cauchy problem of such system of equations.
\end{abstract}
\keywords{Hyperbolic quasilinear systems, objective derivatives, hyperbolic heat conduction, Cattaneo-type extensions}
\maketitle

\setcounter{tocdepth}{1}

\section{Introduction}
One of the best known substitutes for Fourier's law of heat conduction in continuum thermodynamics is the Maxwell-Cattaneo law \cite{cat}
\begin{align}
\tau q_{t}+q=-\kappa\nabla\theta,\label{eq:MC}
\end{align}
where $q$ is the heat flux, $\theta$ the temperature field and $\kappa$ stands as the thermal conductivity. Although the Maxwell-Cattaneo law accounts for finite speed of heat conduction, this model is not Galilean invariant \cite{chjo}. By replacing the partial time derivative in \eqref{eq:MC} with a material derivative (see \eqref{eq:4}) Christov and Jordan showed that a Galilean invariant formulation of the Maxwell-Cattaneo law is obtained, one that predicts the finite speed of propagation of heat \cite{chjo}. However, they also showed that the heat flux in this model cannot be eliminated in order to obtain a single equation for the temperature field. In \cite{christov}, Christov argues the importance of replacing the partial time derivative in \eqref{eq:MC} with an \emph{objective} derivative and proposes 
\begin{equation}
\tau\left[q_{t}+v\cdot\nabla q-q\cdot\nabla v+(\nabla\cdot v)q\right]+q=-\kappa\nabla\theta.\label{eq:5}
\end{equation}
Moreover, Christov shows that, when this evolution equation is combined with the material invariant form of the balance of internal energy, it allows for the heat flux to be eliminated in order to obtain a single hyperbolic equation for the temperature. By considering the coupling between the local form of the conservation of mass ($\rho$), the balance of linear momentum ($\rho v$) and the balance of total energy ($E=\frac{1}{2}\rho|u|^{2}+e$) for a compressible inviscid fluid in space, that is, 
\begin{eqnarray}
\rho_{t}+\nabla\cdot(\rho v)&=&0,\label{eq:1}\\
(\rho v)_{t}+\nabla\cdot(\rho v\otimes v)&=&-\nabla\cdot p,\label{eq:2}\\
(\rho E)_{t}+\nabla\cdot(\rho E v)&=&-\nabla\cdot(pv)-\nabla\cdot q,\label{eq:3}
\end{eqnarray}
together with \eqref{eq:5}, Straughan showed that, for a given wavenumber $\xi_{0}$, there are values of $q$ for which an acoustic wave propagates together with a thermal wave and completely determined the wavespeed for the purely thermal and purely mechanical cases \cite{stra}. However, it was recently shown that this system of equations, also known as the Cattaneo-Christov system, is not \emph{hyperbolic} \cite{felipe1}. The notion of hyperbolicity for a quasilinear systems of $N$ equations is related with the possibility of finding $N$ \emph{different} (linearly independent) waves propagating in any spatial direction (cf. \cite[Chapter III]{daf}). In \cite[Section 5]{felipe1}, it is shown that, for the Cattaneo-Christov system, we can always find particular directions $\xi$ and values of $q$ for which the $N$ different waves doesn't exist. In particular, since the \emph{characteristic speeds} of this system are real, it can be classified as \emph{weakly hyperbolic}. Although the Cauchy problem for weakly hyperbolic systems of equations can be well-posed in Grevey spaces, it is not a sufficient condition for the $\mathcal{C}^{\infty}$-well-posedness (see \cite{colombinimetivier} and the references therein). Moreover, it is well known that the \emph{hyperbolicity} (cf. \cite{benzoni}, \cite{daf}, \cite{serre}) of a first order quasilinear system of equations is a necessary condition for the existence of $L^{2}$-\emph{energy estimates} (cf. \cite[Theorem 3.1.2 and Lemma 3.1.3]{serre}, \cite{kajitani} and \cite{kano}).\\
For these reasons, the present work considers the coupling between \eqref{eq:1}-\eqref{eq:3} with 
\begin{equation}
\tau\left[\partial_{t}q+v\cdot\nabla q-\frac{1}{2}\left(\nabla v-\nabla v^{\top}\right)q+\frac{\lambda}{2}\left(\nabla v+\nabla v^{\top}\right) q+\nu(\nabla\cdot v)q\right]+q=-\kappa\nabla\theta,
\label{eq:objder}
\end{equation}
In \cite{morro2}, Morro shows that \eqref{eq:objder} is objective for any pairs of invariant scalars $\lambda,\nu$. Using an objective derivative for the heat flux and questioning if such formulation is compatible with thermodynamics is an issue that has been addressed in detail by Morro and Giorgi (see \cite{amorro}, \cite{morro2}, \cite{amorro2}, \cite{mogi} for example). In contrast, this work determines which of the frame indifferent formulations for the heat flux in \eqref{eq:objder}, yields a hyperbolic system of quasilinear equations when it is coupled with \eqref{eq:1}-\eqref{eq:3}. We show that there is only one heat flux model in \eqref{eq:objder} with such feature, namely, when $(\lambda,\nu)=(1,-1)$.
\section{Thermodynamical assumptions}
Throughout this paper we make the following thermodynamical and constitutive assumptions:
\begin{itemize}
\item [(\textbf{C1})] The independent thermodynamical variables are the mass density field $\rho(x,t)>0$ and the absolute temperature field $\theta(x,t)>0$. They vary within the domain $\mathcal{D}:=\left\lbrace(\rho,\theta)\in\mathbb{R}^{2}:\rho>0,\theta>0\right\rbrace$.
	The thermodynamic pressure $p$, the internal energy $e$ and the thermal conductivity $\kappa$ are given smooth functions of $(\rho,\theta)\in\mathcal{D}$.
	\item [(\textbf{C2})] The fluid satisfies the following conditions $p, p_{\rho}, p_{\theta}, e_{\theta},\kappa>0$ for all $(\rho,\theta)\in\mathcal{D}$.
\end{itemize}
In particular, assumption (\textbf{C2}) refers to compressible fluids satisfying the standard assumptions of Weyl \cite{weyl}. Also, we assume that, 
\begin{itemize}
	\item [(\textbf{C3})] $\lambda$ and $\nu$ are real valued functions of $(\rho,\theta)\in\mathcal{D}$. 
\end{itemize}
\section{Hyperbolic quasilinear systems of equations}
Let $\lambda$ and $\nu$ be given functions of $(\rho,\theta)\in\mathcal{D}$. Consider the state variable $U=(\rho,v,\theta,q)^{\top}\in\mathcal{O}\subset\mathbb{R}^{N}$, where $\mathcal{O}:=\left\lbrace(\rho,v,\theta,q)\in\mathbb{R}^{N}:\rho>0,\theta>0\right\rbrace$, $N=2d+2$ and $d$ is the spatial dimension. Then, the quasilinear form of \eqref{eq:1}-\eqref{eq:objder} is,
\begin{equation}
A^{0}(U)U_{t}+A^{i}_{\lambda\nu}(U)\partial_{i}U+Q(U)=0,
\label{eq:21}
\end{equation}
where repeated index notation has been used in the space derivatives $\partial_{i}$ and $i=1,..,d$. Here, once $U\in\mathcal{O}$ is given, each coefficient $A^{0}(U)$, $A^{i}(U)$ is a matrix of order $N\times N$ and $Q(U)$ is a vector in $\mathbb{R}^{N}$. In the subsequence, we will refer to \eqref{eq:21} as the \emph{induced} $(\lambda,\nu)$-quasilinear system of equations by the  $(\lambda,\nu)$-objective heat flux model \eqref{eq:objder}.
For $\xi=(\xi_{1},..,\xi_{d})\in\mathbb{S}^{d-1}$ and $U\in\mathcal{O}$ we define the symbol 
\begin{equation}
A_{\lambda\nu}(\xi;U):=\sum_{i=1}^{3}A^{i}_{\lambda,\nu}(U)\xi_{i}.\label{eq:22}
\end{equation}
Let us recall the definition of hyperbolic quasilinear system of equations (cf. \cite{benzoni}, \cite{daf}, \cite{serre}).
\begin{defi}[Hyperbolicity]
\label{hyperbolic}
A quasilinear system of the form \eqref{eq:21} is called hyperbolic if for any fixed state $U\in\mathcal{O}$ and $\xi\in\mathbb{S}^{d-1}$ the matrix $A^{0}(U)$ is non singular and the eigenvalue problem 
\begin{align}
\left(A_{\lambda\nu}(\xi;U)-\eta A^{0}(U)\right)V=0\label{eq:hypdet}
\end{align} 
has real eigenvalues ($\eta\in\mathbb{R}$) and $N$ linearly independent eigenvectors. In particular, the eigenvalues of \eqref{eq:hypdet}, also known as the characteristic speeds of \eqref{eq:21}, satisfy the equation
\begin{align}
\operatorname{det}\left(A_{\lambda\nu}(\xi;U)-\eta A^{0}\right)=0,\label{eq:cs}
\end{align}
for each $U\in\mathcal{O}$ and $\xi\in\mathbb{S}^{d-1}$. Therefore, $\eta=\eta(\xi;U)$. When $d=1$ we simply write $\eta=\eta(U)$.
\end{defi}

\section{One dimensional system}
In the one dimensional case, the matrices defining system \eqref{eq:21} are
\begin{align*}
A^{0}(U):=\left(\begin{array}{cccc}
	1&0&0&0\\
	0&\rho&0&0\\
	0&0&\rho e_{\theta}&0\\
	0&0&0&\tau
\end{array}\right)&,\quad A^{1}(U):=\left(\begin{array}{cccc}
	v&\rho&0&0\\
	p_{\rho}&\rho v& p_{\theta}&0\\
	0&\theta p_{\theta}&\rho ve_{\theta}&1\\
	0&(\lambda+\nu)q&\kappa&\tau v
\end{array}\right),\\
Q(U)&:=\left(0,0,0,q\right)^{T}.
\end{align*}
First, we seek the roots of \eqref{eq:cs}. We use the formula, 
\begin{align}
\label{eq:detblock}
\operatorname{det}\left(\begin{array}{cc}
L&M\\
N&P	
\end{array}\right)=\left(\operatorname{det}L\right)\operatorname{det}\left(P-NL^{-1}M\right),
\end{align}
whenever $L$ is invertible (see \cite{fuz} for example). In this case,
\begin{align*}
L=\left(\begin{array}{cc}
	v-\eta&\rho\\
	P_{\rho}&\rho(v-\eta)
\end{array}\right),\quad M=\left(\begin{array}{cc}
	0&0\\
	p_{\theta}&0
\end{array}\right),\quad N=\left(\begin{array}{cc}
	0&\theta p_{\theta}\\
	0&(\lambda+\nu)q
\end{array}\right),\quad P=\left(\begin{array}{cc}
	\rho e_{\theta}(v-\eta)&1\\
	\kappa&\tau(v-\eta)
\end{array}\right),
\end{align*}
and $\operatorname{det}L=\rho(v-\eta)^{2}-\rho p_{\rho}$. Then, according with \eqref{eq:detblock}, \eqref{eq:cs} is equivalent to 
\begin{align}
\rho e_{\theta}\tau(v-\eta)^{4}-\left(\rho e_{\theta}p_{\rho}\tau+\frac{\theta p_{\theta}^{2}\tau}{\rho}+\kappa\right)(v-\eta)^{2}+(\lambda+\nu)\frac{p_{\theta}q}{\rho}(v-\eta)+\kappa p_{\rho}=0.\label{eq:cp1d}
\end{align}
If we set $z:=v-\eta$, and multiply by $(\rho e_{\theta}\tau)^{-1}$ in \eqref{eq:cp1d}, we obtain 
\begin{align}
z^{4}+a_{2}z^{2}+a_{1}z+a_{0}=0,\label{eq:dquartic}
\end{align}
where 
\begin{align}
\label{eq:coefdep}
a_{2}=-\frac{1}{\rho e_{\theta}\tau}\left(\rho e_{\theta}p_{\rho}\tau+\frac{\theta p_{\theta}^{2}\tau}{\rho}+\kappa\right),\quad a_{1}=(\lambda+\nu)\frac{p_{\theta}}{\rho^{2} e_{\theta}\tau}q,\quad a_{0}=\frac{\kappa p_{\rho}}{\rho e_{\theta}\tau}.
\end{align}
First, we show that, if $\lambda+\nu\neq 0$, the one dimensional case of \eqref{eq:21}, is not a hyperbolic system of equations. We use the following result stated without a proof (see \cite[Theorem 7]{eug} and also \cite{rees}, for example).
\begin{lema}
\label{fourthroots}
A quartic equation of the form \eqref{eq:dquartic} with $a_{0}$, $a_{1}$, $a_{2}$ real, $a_{1}\neq 0$, and with discriminant $\Delta$, has 2 distinct real roots and 2 imaginary roots if $\Delta<0$. \qed
\end{lema}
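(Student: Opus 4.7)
The plan is to give a direct case analysis on the set of roots of the quartic, using the classical expression of the discriminant of a monic polynomial with roots $r_{1},\dots,r_{4}\in\mathbb{C}$ as
\begin{equation*}
\Delta=\prod_{1\leq i<j\leq 4}(r_{i}-r_{j})^{2}.
\end{equation*}
The hypothesis $\Delta<0$ forces $\Delta\neq 0$, so the four roots are distinct; and since the coefficients are real the non-real roots must come in complex-conjugate pairs. This leaves exactly three possibilities: (i) four distinct real roots; (ii) two distinct real roots $r_{1},r_{2}$ together with one complex-conjugate pair $c,\bar{c}$ with $\operatorname{Im} c\neq 0$; (iii) two distinct complex-conjugate pairs $c_{1},\bar{c}_{1},c_{2},\bar{c}_{2}$.

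The main body of the argument is then a sign computation in each of these three cases. In (i) every factor $(r_{i}-r_{j})^{2}$ is a positive real, so $\Delta>0$. In (iii) the two diagonal factors $(c_{k}-\bar{c}_{k})^{2}$ are negative reals, while the cross factors pair up via conjugation as
\begin{equation*}
(c_{1}-c_{2})^{2}(\bar{c}_{1}-\bar{c}_{2})^{2}=|c_{1}-c_{2}|^{4},\qquad (c_{1}-\bar{c}_{2})^{2}(\bar{c}_{1}-c_{2})^{2}=|c_{1}-\bar{c}_{2}|^{4},
\end{equation*}
both positive; so the total has two negative and four positive factors and $\Delta>0$. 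Only in (ii) does the discriminant pick up a single negative factor, namely $(c-\bar{c})^{2}=-4(\operatorname{Im} c)^{2}<0$, the remaining ones being $(r_{1}-r_{2})^{2}>0$ and $(r_{k}-c)^{2}(r_{k}-\bar{c})^{2}=|r_{k}-c|^{4}>0$ for $k=1,2$; hence $\Delta<0$. Comparing the three cases shows that $\Delta<0$ forces scenario (ii), which is exactly the conclusion of Lemma \ref{fourthroots}.

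The only mildly delicate step is the bookkeeping in (iii), where one must recognize that conjugation permutes the cross factors into pairs producing moduli squared; beyond this, the argument uses nothing more than Vieta's relations and the identity $z\bar{z}=|z|^{2}$. The hypothesis $a_{1}\neq 0$ plays no role in this sign analysis and the conclusion in fact holds for every depressed quartic with real coefficients; it is presumably retained from \cite{eug} to avoid the biquadratic degeneracy $z^{4}+a_{2}z^{2}+a_{0}=0$, which is harmless here since in the application to \eqref{eq:cp1d} the coefficient $a_{1}$ is proportional to $(\lambda+\nu)q$ and is assumed nonzero.
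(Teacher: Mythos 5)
Your proposal is correct, but the comparison here is asymmetric: the paper gives no proof of Lemma \ref{fourthroots} at all --- it is explicitly ``stated without a proof,'' with the terminal \textsl{qed} standing in for the citations \cite[Theorem 7]{eug} and \cite{rees}. What you have written is the classical argument underlying that citation, and it makes the lemma self-contained. Your case analysis is complete and sound: $\Delta<0$ excludes repeated roots, real coefficients force non-real roots into conjugate pairs, and the three resulting configurations yield discriminant signs $+,-,+$ exactly as you compute; the pairing of the cross factors via conjugation in case (iii), giving $|c_{1}-c_{2}|^{4}$ and $|c_{1}-\bar{c}_{2}|^{4}$, is the right bookkeeping, and distinctness of the roots guarantees these moduli are nonzero. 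The one step you pass over silently is the identification of the coefficient formula \eqref{eq:discriminant} with $\prod_{i<j}(r_{i}-r_{j})^{2}$ for the monic quartic \eqref{eq:dquartic} --- this is the standard resultant identity in the standard normalization, and a one-line sanity check such as $z^{4}-1$ (roots $\pm 1$, $\pm i$; product of squared differences $-256$; formula value $256a_{0}^{3}=-256$) confirms the sign convention, so there is no gap, though a sentence acknowledging this normalization would tighten the write-up. Your side remark is also accurate: $a_{1}\neq 0$ plays no role in the sign analysis, and the trichotomy holds for any real quartic with $\Delta\neq 0$; the hypothesis is inherited from the formulation in \cite{eug}, and in the application to \eqref{eq:cp1d} it is supplied anyway since $a_{1}$ is proportional to $(\lambda+\nu)q$. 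In short: the paper buys brevity by outsourcing the lemma; your argument buys self-containedness, and as a bonus establishes the converse implications ($\Delta>0$ in the all-real and two-pair cases), which the cited statement does not assert.
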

The discriminant, $\Delta=\Delta(a_{0},a_{1},a_{2})$, of \eqref{eq:dquartic} is given as 
\begin{align}
\Delta(a_{0},a_{1},a_{2})=256a_{0}^{3}-128a_{2}^{2}a_{0}^{2}+16a_{0}a_{2}^{4}+144a_{0}a_{1}^{2}a_{2}-4a_{1}^{2}a_{2}^{3}-27a_{1}^{4},\label{eq:discriminant}
\end{align}
see \cite[page 41]{eug} and \cite[page 405]{geka} for example.
\begin{theo}
\label{complexroots}
Let $(\rho^{\ast},\theta^{\ast})\in\mathcal{D}$ be such that $\lambda(\rho^{\ast},\theta^{\ast})+\nu(\rho^{\ast},\theta^{\ast})\neq 0$. Then, there are values of $q\in\mathbb{R}$ for which the characteristic polynomial \eqref{eq:cp1d} has complex roots and thus, the $(\lambda,\nu)$-quasilinear system \eqref{eq:21} is not hyperbolic.
\end{theo}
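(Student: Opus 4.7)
The plan is to reduce everything to Lemma \ref{fourthroots} by viewing the discriminant $\Delta$ as a polynomial in the parameter $q$ at the fixed thermodynamic state $(\rho^\ast,\theta^\ast)$. Since the coefficients $a_0$ and $a_2$ in \eqref{eq:coefdep} depend only on $(\rho,\theta)$ and the constitutive functions, they are constants once we freeze the state, while $a_1 = \bigl((\lambda+\nu)p_\theta/(\rho^2 e_\theta\tau)\bigr)\, q$ is linear in $q$. Writing $c:=(\lambda(\rho^\ast,\theta^\ast)+\nu(\rho^\ast,\theta^\ast))\,p_\theta(\rho^\ast,\theta^\ast)/(\rho^{\ast\,2} e_\theta(\rho^\ast,\theta^\ast)\tau)$, the hypothesis $\lambda+\nu\neq 0$ at $(\rho^\ast,\theta^\ast)$ together with assumption (\textbf{C2}) (which gives $p_\theta,e_\theta>0$) yields $c\neq 0$, so $a_1=c\,q$ is nonzero whenever $q\neq 0$, which is exactly the hypothesis needed to apply Lemma \ref{fourthroots}.

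Next I would substitute $a_1=cq$ into the explicit expression \eqref{eq:discriminant} for $\Delta$. All $q$-free terms collect into a constant (depending only on $a_0,a_2$ at the fixed state), the terms containing $a_1^2$ become quadratic in $q$, and the term $-27a_1^4=-27c^4q^4$ is quartic. Thus, at the state $(\rho^\ast,\theta^\ast)$,
\begin{equation*}
\Delta(q)=-27c^4\,q^4 + \bigl(144 a_0 a_2-4 a_2^3\bigr)c^2 q^2 + \bigl(256a_0^3-128a_2^2 a_0^2+16 a_0 a_2^4\bigr),
\end{equation*}
is a polynomial in $q$ of degree exactly $4$ with \emph{negative} leading coefficient $-27c^4<0$.

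From the negativity of the leading coefficient it follows immediately that $\Delta(q)\to -\infty$ as $|q|\to\infty$. Hence there exists $q_0\in\mathbb{R}\setminus\{0\}$ (in fact, all $q$ of sufficiently large modulus) with $\Delta(q_0)<0$. For such $q_0$ the quartic \eqref{eq:dquartic} has $a_1\neq 0$, so Lemma \ref{fourthroots} applies and yields two non-real roots $z$; equivalently, the original characteristic polynomial \eqref{eq:cp1d} admits two complex characteristic speeds $\eta=v-z$. At the state $U^\ast:=(\rho^\ast,v,\theta^\ast,q_0)\in\mathcal O$ (with $v$ arbitrary) Definition \ref{hyperbolic} therefore fails, since the eigenvalues of the pencil $A^{1}_{\lambda\nu}(U^\ast)-\eta A^{0}(U^\ast)$ are not all real. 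Thus the one-dimensional $(\lambda,\nu)$-quasilinear system is not hyperbolic.

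I do not expect any genuine obstacle: the two points that need care are (i) verifying that $a_1\neq 0$ so that Lemma \ref{fourthroots} is applicable — this is where assumption (\textbf{C2}) is used — and (ii) identifying the sign of the $q^4$-coefficient in $\Delta(q)$, which is automatic from the $-27a_1^4$ term. Everything else is a direct substitution into the stated lemma.
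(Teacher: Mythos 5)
Your proposal is correct and follows essentially the same route as the paper: both treat the discriminant \eqref{eq:discriminant} as a polynomial in $a_{1}=cq$ at the frozen state, observe that the leading term $-27a_{1}^{4}$ forces $\Delta<0$ for $|q|$ large (the paper just makes this quantitative via the explicit threshold \eqref{eq:qvalues} instead of your limit argument), and then invoke Lemma \ref{fourthroots}. Your extra remark that (\textbf{C2}) guarantees $c\neq 0$, so $a_{1}\neq 0$ as the lemma requires, is a point the paper handles identically through $g(\rho^{\ast},\theta^{\ast})\neq 0$.
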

\begin{proof}
Set $\gamma^{\ast}:=\lambda(\rho^{\ast},\theta^{\ast})+\nu(\rho^{\ast},\theta^{\ast})$. Observe that \eqref{eq:discriminant} can be rewritten as a fourth order polynomial in the variable $a_{1}$, namely
\begin{align*}
\mathcal{P}(a_{1}):=Aa_{1}^{4}+Ba_{1}^{2}+C,
\end{align*}
where $A:=-27$, $B:=144a_{0}a_{2}-4a_{2}^{3}>0$, $C:=16a_{0}\left(a_{2}^{2}-4a_{0}\right)^{2}\geq 0$ and, according with \eqref{eq:coefdep}, $B=B(\rho^{\ast},\theta^{\ast})$, $C=C(\rho^{\ast},\theta^{\ast})$ and $a_{1}=g(\rho^{\ast},\theta^{\ast})q$ for $g(\rho^{\ast},\theta^{\ast})=\gamma^{\ast}\tfrac{p_{\theta}(\rho^{\ast},\theta^{\ast})}{(\rho^{\ast})^{2}e_{\theta}(\rho^{\ast},\theta^{\ast})\tau}\neq 0$. By taking,
\begin{align}
q^{2}>\max\left\lbrace\frac{-C(\rho^{\ast},\theta^{\ast})-B(\rho^{\ast},\theta^{\ast})}{A(\rho^{\ast},\theta^{\ast})g^{2}(\rho^{\ast},\theta^{\ast})},~\frac{2}{g^{2}(\rho^{\ast},\theta^{\ast})}\right\rbrace\label{eq:qvalues}
\end{align}
and using that $A<0$, it follows that $Aa_{1}^{2}+B<-C$, and so
\begin{align*}
\mathcal{P}(a_{1})=a_{1}^{2}\left(Aa_{1}^{2}+B\right)+C<-C.
\end{align*}
Hence, \eqref{eq:discriminant} is negative and according with Lemma \ref{fourthroots}, there are two complex roots of \eqref{eq:dquartic}. Now, let $v\in\mathbb{R}$, take $q$ satisfying \eqref{eq:qvalues} and set $U^{\ast}:=(\rho^{\ast},v,\theta^{\ast},q)\in\mathcal{O}\subset\mathbb{R}^{4}$. Then, by the previous argument, the characteristic polynomial \eqref{eq:cp1d} has complex roots at $U^{\ast}$. This proves the result.
\end{proof}
\section{Three dimensional system}
Set $d=3$ and observe that $N=8$. In this case, $A^{0}(U)$ is a diagonal matrix given as
\begin{align}
A^{0}(U)=\left(\begin{array}{cccc}
	1&&&\\
	&\rho\mathbb{I}_{3}&&\\
	&&\rho e_{\theta}&\\
	&&&\tau\mathbb{I}_{3}
\end{array}\right),\label{eq:A03d}
\end{align}
where $\mathbb{I}_{3}$ denotes the identity matrix of order $3\times 3$ and all the empty spaces refer to zero block matrices of the appropriate sizes. We have that
\small
\begin{align}
\label{eq:Axi}
A_{\lambda,\nu}(\xi;U)=\left(\begin{array}{cccccccc}
	\xi\cdot v&\xi_{1}\rho&\xi_{2}\rho&\xi_{3}\rho&0&0&0&0\\
	\xi_{1}p_{\rho}&\rho\xi\cdot v&0&0&\xi_{1}p_{\theta}&0&0&0\\
	\xi_{2}p_{\rho}&0&\rho\xi\cdot v&0&\xi_{2}p_{\theta}&0&0&0\\
	\xi_{3}p_{\rho}&0&0&\rho\xi\cdot v&\xi_{3}p_{\theta}&0&0&0\\
	0&\xi_{1}\theta p_{\theta}&\xi_{2}\theta p_{\theta}&\xi_{3}\theta p_{\theta}&\rho e_{\theta}\xi\cdot v&\xi_{1}&\xi_{2}&\xi_{3}\\
	0&&&&\xi_{1}\kappa&\tau\xi\cdot v&0&0\\
	0&&\tau\mathcal{Q}_{\lambda,\nu}(\xi;q)&&\xi_{2}\kappa&0&\tau\xi\cdot v&0\\
	0&&&&\xi_{3}\kappa&0&0&\tau\xi\cdot v\\
\end{array}\right),
\end{align}
\normalsize
where, for each $\xi\in\mathbb{S}^{2}$, the sub-block matrix $\mathcal{Q}_{\lambda,\nu}(q;\xi)$ is of order $3\times 3$ and given as
\small
\begin{equation}
\mathcal{Q}_{\lambda,\nu}(q;\xi)=\left(\begin{array}{ccc}
	\gamma\xi_{1}q_{1}+\lambda^{-}\left(\xi_{2}q_{2}+\xi_{3}q_{3}\right)&\lambda^{+}\xi_{1}q_{2}+\nu\xi_{2}q_{1}&\lambda^{+}\xi_{1}q_{3}+\nu\xi_{3}q_{1}\\
	\lambda^{+}\xi_{2}q_{1}+\nu\xi_{1}q_{2}&\gamma\xi_{2}q_{2}+\lambda^{-}\left(\xi_{1}q_{1}+\xi_{3}q_{3}\right)&\lambda^{+}\xi_{2}q_{3}+\nu\xi_{3}q_{2}\\
	\lambda^{+}\xi_{3}q_{1}+\nu\xi_{1}q_{3}&\lambda^{+}\xi_{3}q_{2}+\nu\xi_{2}q_{3}&\gamma\xi_{3}q_{3}+\lambda^{-}\left(\xi_{1}q_{1}+\xi_{2}q_{2}\right)
\end{array}\right),
\label{eq:23}
\end{equation}
\normalsize
where $\gamma:=\lambda+\nu$, $\lambda^{+}:=\frac{\lambda}{2}+\frac{1}{2}$ and $\lambda^{-}:=\frac{\lambda}{2}-\frac{1}{2}$, and
\begin{align}
Q(U)=\left(0,0,0,0,0,q_{1},q_{2},q_{3}\right)^{\top}.\label{eq:QU}
\end{align}
By using formula \eqref{eq:detblock}, we can compute the characteristic polynomial for (\ref{eq:21}), yielding
\begin{equation}
\det\left(A_{\lambda\nu}(\xi;U)-\eta A^{0}(\xi;U)\right)=\rho^{3}\tau^{2}(\xi\cdot v-\eta)^{4}P_{\lambda,\nu}(\xi,U;\eta),
\label{eq:59}
\end{equation}
where 
\begin{equation}
P_{\lambda,\nu}(\xi,U;\eta)=\rho e_{\theta}\tau(\xi\cdot v-\eta)^{4}-\left(\tau\rho e_{\theta}p_{\rho}+\frac{\tau\theta p_{\theta}^{2}}{\rho}+\kappa\right)(\xi\cdot v-\eta)^{2}+\frac{p_{\theta}}{\rho}h_{\lambda,\nu}(\xi;q)(\xi\cdot v-\eta)+\kappa p_{\rho}
\label{eq:510}
\end{equation}
and $h_{\lambda,\nu}(\xi;q):=\left(Q_{\lambda,\nu}(\xi;q)\xi\right)\cdot\xi$.
\begin{lema}
\label{Qnontrivial}
Fix $\lambda,\nu\in\mathbb{R}$. The mapping, $\mathbb{S}^{2}\times\mathbb{R}^{3}\ni(\xi;q)\mapsto\mathcal{Q}_{\lambda,\nu}(\xi;q)\in\mathbb{M}_{3\times3}$, is non-trivial.
\end{lema}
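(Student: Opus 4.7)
The plan is to exhibit explicit $(\xi,q)$ at which $\mathcal{Q}_{\lambda,\nu}(\xi;q)$ is nonzero, handling the single parameter value where the most natural choice degenerates by a secondary case distinction.

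First, I would test the diagonal probe $\xi = e_1 = (1,0,0)$ and $q = e_1 = (1,0,0)$. Reading off the entries from the explicit form of $\mathcal{Q}_{\lambda,\nu}$, the off-diagonal entries all vanish and one obtains the diagonal matrix $\operatorname{diag}(\gamma,\lambda^-,\lambda^-)$. Thus if $(\gamma,\lambda^-)\neq(0,0)$ we are done immediately.

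The only parameter pair for which this probe fails is the one forced by $\gamma=\lambda+\nu=0$ and $\lambda^-=\tfrac{\lambda-1}{2}=0$, i.e.\ $(\lambda,\nu)=(1,-1)$, which then yields $\lambda^+=1$. In this degenerate case I would switch to the probe $\xi=e_1$, $q=e_2=(0,1,0)$. The $(1,2)$-entry of $\mathcal{Q}_{\lambda,\nu}$ is then $\lambda^+\xi_1 q_2+\nu\xi_2 q_1=\lambda^+=1\neq 0$, so $\mathcal{Q}_{\lambda,\nu}(\xi;q)$ is nonzero. (More conceptually, at $(\lambda,\nu)=(1,-1)$ the matrix $\mathcal{Q}_{\lambda,\nu}(\xi;q)$ reduces to an antisymmetric form whose off-diagonal entries are the components of $\xi\wedge q$, and is nonzero whenever $\xi$ and $q$ are linearly independent.)

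There is no real obstacle here: the statement only requires the map not be identically zero, and evaluation at two specific points suffices. The only subtlety is recognising the single parameter choice for which the simplest diagonal probe returns zero, which is handled by the second evaluation.
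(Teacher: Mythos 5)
Your proof is correct and follows essentially the same route as the paper: both use the probe $\xi=(1,0,0)$ with $q$ along $e_1$ to force $(\lambda,\nu)=(1,-1)$ as the only possibly degenerate case, and then the probe $q$ along $e_2$ to rule it out (the paper reads off the $(2,1)$-entry $\nu q_2\neq 0$ where you read off the $(1,2)$-entry $\lambda^{+}q_2\neq 0$, an immaterial difference). The only cosmetic distinction is that the paper phrases the argument as a contradiction while you argue directly by exhibiting a nonzero value.
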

\begin{proof}
Assume otherwise, that is, for all $q\in\mathbb{R}^{3}$ and all $\xi\in\mathbb{S}^{2}$, $\mathcal{Q}_{\lambda,\nu}(\xi;q)=0$. Take $\xi=(1,0,0)$ and $q=(q_{1},0,0)$ with $q_{1}\neq 0$. Then, by \eqref{eq:23}, $\gamma q_{1}=0$ and $\lambda^{-}q_{1}=0$. This means that $\lambda=1$ and $\nu=-1$. On the other hand, take $\xi=(1,0,0)$ and $q=(0,q_{2},0)$ with $q_{2}\neq 0$. Then, \eqref{eq:23} implies that $\nu q_{2}=0$, a contradiction.
\end{proof}
\begin{lema}
\label{hnull}
Fix $\lambda,\nu\in\mathbb{R}$. The mapping, $\mathbb{S}^{2}\times\mathbb{R}^{3}\ni(\xi,q)\mapsto h_{\lambda,\nu}(\xi;q)\in\mathbb{R}$, is null if and only if $\lambda+\nu=0$.
\end{lema}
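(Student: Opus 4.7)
My plan is to reduce $h_{\lambda,\nu}(\xi;q)$ to a simple closed form by computing the quadratic form $(\mathcal{Q}_{\lambda,\nu}(\xi;q)\xi)\cdot\xi$ explicitly and grouping terms by the components of $q$. Once this is done, the ``if and only if'' statement will follow by inspection, so the interesting step is really the algebraic simplification.

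\textbf{Step 1: Expand.} I would write
\[
h_{\lambda,\nu}(\xi;q)=\sum_{i,j=1}^{3}(\mathcal{Q}_{\lambda,\nu}(\xi;q))_{ij}\,\xi_{i}\xi_{j}
\]
using the entries of $\mathcal{Q}_{\lambda,\nu}$ from \eqref{eq:23}, and then collect all terms multiplying $q_{1}$. The contributions are: $\gamma\xi_{1}^{3}$ from the diagonal entry $(1,1)$; the $\nu$-pieces of entries $(1,2)$ and $(1,3)$ which give $\nu\xi_{1}\xi_{2}^{2}+\nu\xi_{1}\xi_{3}^{2}$; the $\lambda^{+}$-pieces of $(2,1)$ and $(3,1)$ which yield $\lambda^{+}\xi_{1}\xi_{2}^{2}+\lambda^{+}\xi_{1}\xi_{3}^{2}$; and the $\lambda^{-}$-pieces of $(2,2)$ and $(3,3)$ giving $\lambda^{-}\xi_{1}\xi_{2}^{2}+\lambda^{-}\xi_{1}\xi_{3}^{2}$.

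\textbf{Step 2: Use $\lambda^{+}+\lambda^{-}=\lambda$.} The total coefficient of $q_{1}$ is therefore
\[
\gamma\xi_{1}^{3}+(\nu+\lambda^{+}+\lambda^{-})\xi_{1}(\xi_{2}^{2}+\xi_{3}^{2})=\gamma\xi_{1}(\xi_{1}^{2}+\xi_{2}^{2}+\xi_{3}^{2})=\gamma\xi_{1},
\]
where in the last step I use $\xi\in\mathbb{S}^{2}$. By the cyclic symmetry visible in \eqref{eq:23}, the coefficient of $q_{k}$ is $\gamma\xi_{k}$ for each $k$, and hence
\[
h_{\lambda,\nu}(\xi;q)=\gamma\,(\xi\cdot q)=(\lambda+\nu)\,(\xi\cdot q).
\]

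\textbf{Step 3: Conclude both implications.} If $\lambda+\nu=0$, this formula shows $h_{\lambda,\nu}\equiv 0$. Conversely, if the map is identically zero, picking $\xi=q=(1,0,0)$ yields $\lambda+\nu=0$. The main (and essentially only) obstacle is the bookkeeping in Step 1: one must verify that every $\lambda^{\pm}$ and $\nu$ contribution is accounted for, and that the combination $\nu+\lambda^{+}+\lambda^{-}$ matches $\gamma$ precisely; this is what forces the transverse ($q_{k}$ with $k\neq 1$) contributions to vanish and leaves the clean projection $\gamma(\xi\cdot q)$.
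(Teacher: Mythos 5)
Your proof is correct and takes essentially the same route as the paper's: both expand the quadratic form $\left(\mathcal{Q}_{\lambda,\nu}(\xi;q)\xi\right)\cdot\xi$ to obtain the closed form $h_{\lambda,\nu}(\xi;q)=\gamma|\xi|^{2}(\xi\cdot q)$, which on $\mathbb{S}^{2}$ is your $\gamma(\xi\cdot q)$, after which both implications are immediate. The only cosmetic difference is that the paper verifies the converse with an arbitrary pair satisfying $\xi_{0}\cdot q_{0}\neq 0$ instead of your explicit choice $\xi=q=(1,0,0)$.
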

\begin{proof}
First observe that for any $q\in\mathbb{R}^{3}$ and $\xi\in\mathbb{S}^{2}$ it holds that 
\begin{align*}
Q_{\lambda,\nu}(\xi;q)\xi\cdot \xi=\gamma\left(\xi_{1}^{3}q_{1}+\xi_{2}^{3}q_{2}+\xi_{3}^{3}q_{3}+\xi_{1}^{2}\xi_{2}q_{2}+\xi_{1}^{2}\xi_{3}q_{3}+\xi_{1}\xi_{2}^{2}q_{1}+\xi_{1}\xi_{3}^{2}q_{1}+\xi_{3}\xi_{2}^{2}q_{3}+\xi_{2}\xi_{3}^{2}q_{2}\right),
\end{align*}
which yields the formula
\begin{align}
\label{eq:hnull}
h_{\lambda,\nu}(q;\xi)=\gamma|\xi|^{2}\xi\cdot q\quad\forall\quad q\in\mathbb{R}^{3},\quad\xi\in\mathbb{S}^{2}.
\end{align}
Then, if $\gamma=\lambda+\nu=0$, \eqref{eq:hnull} implies that $h_{\lambda,\nu}(q;\xi)=0$ for any $q\in\mathbb{R}^{3}$ and $\xi\in\mathbb{S}^{2}$. On the other hand, assume that $h_{\lambda,\nu}(\xi;q)=0$ for all $\xi\in\mathbb{S}^{2}$ and $q\in\mathbb{R}^{3}$. If so, it must be true for $q_{0}\in\mathbb{R}^{3}$ and $\xi_{0}\in\mathbb{S}^{2}$ such that $\xi_{0}\cdot q_{0}\neq 0$. Then, by \eqref{eq:hnull}, we have that $\gamma|\xi_{0}|^{2}\xi_{0}\cdot q_{0}=0$, which implies that $\gamma=\lambda+\nu=0$.
\end{proof}
\begin{theo}
Set $d=3$ and consider the $(\lambda,\nu)$-quasilinear system defined by \eqref{eq:A03d}, \eqref{eq:Axi}, \eqref{eq:23} and \eqref{eq:QU}. If there is $(\rho^{\ast},\theta^{\ast})\in\mathcal{D}$ such that $\gamma^{\ast}:=\lambda(\rho^{\ast},\theta^{\ast})+\nu(\rho^{\ast},\theta^{\ast})\neq 0$ then, the $(\lambda,\nu)$-quasilinear system is not hyperbolic. 
\end{theo}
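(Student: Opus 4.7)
The plan is to reduce the three-dimensional statement to the one-dimensional Theorem \ref{complexroots} by exploiting the factorization \eqref{eq:59} of the characteristic polynomial and the explicit formula for $h_{\lambda,\nu}$ provided by Lemma \ref{hnull}. First I would fix $(\rho^\ast,\theta^\ast)\in\mathcal{D}$ with $\gamma^\ast\neq 0$, choose an arbitrary $v\in\mathbb{R}^3$ and an arbitrary direction $\xi\in\mathbb{S}^2$, and reduce the question of (non)hyperbolicity of \eqref{eq:21} at $U^\ast=(\rho^\ast,v,\theta^\ast,q)$ to the analysis of the roots of $P_{\lambda,\nu}(\xi,U^\ast;\eta)$, since the prefactor $\rho^3\tau^2(\xi\cdot v-\eta)^4$ contributes only the real eigenvalue $\eta=\xi\cdot v$ (which a priori only affects the count of linearly independent eigenvectors, not the reality of the spectrum).

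Next I would substitute $z:=\xi\cdot v-\eta$ into \eqref{eq:510} and use Lemma \ref{hnull}, which gives $h_{\lambda,\nu}(\xi;q)=\gamma^\ast\,\xi\cdot q$ (because $|\xi|^2=1$), so that after dividing by $\rho^\ast e_\theta(\rho^\ast,\theta^\ast)\tau$ the quartic factor $P_{\lambda,\nu}$ takes precisely the depressed form
\begin{equation*}
z^4+a_2 z^2+a_1 z+a_0=0,
\end{equation*}
with the same $a_0,a_2$ as in \eqref{eq:coefdep} and with $a_1=\gamma^\ast\,\frac{p_\theta(\rho^\ast,\theta^\ast)}{(\rho^\ast)^2 e_\theta(\rho^\ast,\theta^\ast)\tau}\,\xi\cdot q$. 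In other words, the effect of switching from $d=1$ to $d=3$ is simply to replace the scalar $q$ appearing in \eqref{eq:coefdep} by the scalar projection $\xi\cdot q$, while the multiplicative factor $\gamma^\ast$ survives unchanged.

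From here I would essentially reapply the argument in the proof of Theorem \ref{complexroots}. Regarding $\Delta$ as a polynomial $\mathcal{P}(a_1)$ as in that proof, the same coefficients $A,B,C$ appear (depending only on $(\rho^\ast,\theta^\ast)$), and the only remaining freedom is the value of $a_1$, which now can be made arbitrarily large by choosing $q$ suitably. Concretely I would take $q:=t\xi$ for a scalar $t\in\mathbb{R}$, so that $\xi\cdot q=t$ and $a_1=\gamma^\ast\,\frac{p_\theta(\rho^\ast,\theta^\ast)}{(\rho^\ast)^2 e_\theta(\rho^\ast,\theta^\ast)\tau}\,t$. Choosing $|t|$ large enough that $a_1^2$ exceeds the analog of the bound \eqref{eq:qvalues} makes $\mathcal{P}(a_1)<0$, hence $\Delta<0$, so Lemma \ref{fourthroots} furnishes two non-real roots of the depressed quartic, and consequently two non-real eigenvalues $\eta$ of the pencil $A_{\lambda\nu}(\xi;U^\ast)-\eta A^0(U^\ast)$. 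By Definition \ref{hyperbolic} the system is not hyperbolic at $U^\ast$.

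I do not expect any serious obstacle: once Lemma \ref{hnull} is invoked to turn the three-dimensional coefficient $h_{\lambda,\nu}(\xi;q)$ into $\gamma^\ast\xi\cdot q$, the proof is a verbatim transcription of the one-dimensional case, with $q$ replaced by $\xi\cdot q$. The only point requiring a line of justification is the remark that the factor $(\xi\cdot v-\eta)^4$ in \eqref{eq:59} does not rescue hyperbolicity, which is immediate since non-real roots of $P_{\lambda,\nu}$ already violate the reality requirement of Definition \ref{hyperbolic}.
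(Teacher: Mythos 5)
Your proposal is correct and takes essentially the same approach as the paper: the paper likewise reduces to the one-dimensional quartic through the factorization \eqref{eq:59} and the formula $h_{\lambda,\nu}(\xi;q)=\gamma\,\xi\cdot q$ from Lemma \ref{hnull}, aligns the direction with the heat flux (it fixes $q$ and sets $\xi_{q}=q/|q|$, which is the same configuration as your choice $q=t\xi$), and then invokes the bound \eqref{eq:qvalues} with $|q|^{2}$ in place of $q^{2}$ to force a negative discriminant and complex roots via Lemma \ref{fourthroots}. Your explicit observation that the prefactor $(\xi\cdot v-\eta)^{4}$ contributes only the real root $\xi\cdot v$ and so cannot restore hyperbolicity is left implicit in the paper, but is the same reasoning.
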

\begin{proof}
For any $q\in\mathbb{R}^{3}\setminus\{0\}$ set $\xi_{q}=\frac{q}{|q|}\in\mathbb{S}^{2}$. Then, by \eqref{eq:hnull} and Lemma \ref{hnull}, $h_{\lambda,\nu}(\xi_{q};q)=\gamma|q|\neq 0$. Take $v\in\mathbb{R}^{3}$ and define the state $U^{\ast}:=(\rho^{\ast},v,\theta^{\ast},q)$. Set $p_{\rho}^{\ast}=p_{\rho}(\rho^{\ast},\theta^{\ast})$, $p_{\theta}^{\ast}=p_{\theta}(\rho^{\ast},\theta^{\ast})$, $e_{\theta}^{\ast}=e_{\theta}(\rho^{\ast},\theta^{\ast})$ and $\kappa^{\ast}=\kappa(\rho^{\ast},\theta^{\ast})$. Consider $P_{\lambda,\nu}(\xi_{q},U^{\ast};\eta)=0$, which according with \eqref{eq:510} is equivalent to 
\begin{align}
\rho^{\ast} e_{\theta}^{\ast}\tau(\xi_{q}\cdot v-\eta)^{4}-\left(\tau\rho^{\ast} e_{\theta}^{\ast}p_{\rho}^{\ast}+\frac{\tau\theta^{\ast} (p_{\theta}^{\ast})^{2}}{\rho^{\ast}}+\kappa^{\ast}\right)(\xi_{q}\cdot v-\eta)^{2}+\frac{p_{\theta}^{\ast}}{\rho^{\ast}}\gamma^{\ast}|q|(\xi_{q}\cdot v-\eta)+\kappa^{\ast} p_{\rho}^{\ast}=0.
\label{eq:P3d}
\end{align}
Notice that, \eqref{eq:P3d} is the same as the polynomial given in \eqref{eq:cp1d} but with $|q|$ and $v\cdot\xi_{q}$ replacing $q$ and $v$, respectively. Thus, by \eqref{eq:qvalues}, with $|q|^{2}$ instead of $q^{2}$, it holds that \eqref{eq:P3d} has complex roots. Therefore, the $(\lambda,\nu)$-quasilinear system is not hyperbolic if $\lambda+\nu\neq 0$ on $\mathcal{D}$.
\end{proof}
\section{The Cattaneo-Christov-Jordan system}
Consider the coupling between \eqref{eq:1}-\eqref{eq:3} and the material formulation of the Cattaneo heat flux model, namely,
\begin{equation}
\tau\left(q_{t}+u\cdot\nabla q\right)+q=-\kappa\nabla\theta,
\label{eq:4}
\end{equation} 
This system can be written in the quasilinear form \eqref{eq:21} where for each $U\in\mathcal{O}$, $A^{0}(U)$ and $Q(U)$ are given by \eqref{eq:A03d} and \eqref{eq:QU} respectively, however, instead of $A_{\lambda,\nu}(\xi;U)$ we have the following symbol
\small
\begin{align}
\label{eq:AxiCCJ}
A(\xi;U)=\left(\begin{array}{cccccccc}
	\xi\cdot v&\xi_{1}\rho&\xi_{2}\rho&\xi_{3}\rho&0&0&0&0\\
	\xi_{1}p_{\rho}&\rho\xi\cdot v&0&0&\xi_{1}p_{\theta}&0&0&0\\
	\xi_{2}p_{\rho}&0&\rho\xi\cdot v&0&\xi_{2}p_{\theta}&0&0&0\\
	\xi_{3}p_{\rho}&0&0&\rho\xi\cdot v&\xi_{3}p_{\theta}&0&0&0\\
	0&\xi_{1}\theta p_{\theta}&\xi_{2}\theta p_{\theta}&\xi_{3}\theta p_{\theta}&\rho e_{\theta}\xi\cdot v&\xi_{1}&\xi_{2}&\xi_{3}\\
	0&&&&\xi_{1}\kappa&\tau\xi\cdot v&0&0\\
	0&&\mathbb{O}_{3\times3}&&\xi_{2}\kappa&0&\tau\xi\cdot v&0\\
	0&&&&\xi_{3}\kappa&0&0&\tau\xi\cdot v\\
\end{array}\right),
\end{align}
\normalsize
where, $\mathbb{O}_{3\times 3}$ denotes the zero matrix of order $3\times 3$. This system was first proposed by Christov and Jordan \cite{chjo}. They showed that \eqref{eq:4} is Galilean invariant. We refer to the quasilinear system defined by \eqref{eq:A03d}, \eqref{eq:QU} and \eqref{eq:AxiCCJ} as the Cattaneo-Christov-Jordan system. Morro points out that \eqref{eq:4} is not objective and therefore for no value of $\lambda$ and $\nu$ this equation can be deduced from \eqref{eq:objder}. This coincides with Lemma \ref{Qnontrivial}. Nonetheless, formally, we can understand the Cattaneo-Christov-Jordan system as a quasilinear system with $\mathbb{O}_{3\times 3}$ instead of $\mathcal{Q}_{\lambda,\nu}(\xi;q)$. The characteristic polynomial of the Cattaneo-Christov-Jordan system has the form 
\begin{equation}
\det\left(A_{\lambda\nu}(\xi;U)-\eta A^{0}(\xi;U)\right)=\rho^{3}\tau^{2}(\xi\cdot v-\eta)^{4}P_{0}(\xi,U;\eta),
\label{eq:CCJpolynomial}
\end{equation}
where 
\begin{equation}
P_{0}(\xi,U;\eta)=\rho e_{\theta}\tau(\xi\cdot v-\eta)^{4}-\left(\tau\rho e_{\theta}p_{\rho}+\frac{\tau\theta p_{\theta}^{2}}{\rho}+\kappa\right)(\xi\cdot v-\eta)^{2}+\kappa p_{\rho}.
\label{eq:realroots}
\end{equation}
The characteristic roots of the three dimensional Cattaneo-Christov-Jordan system are real and given by 
\begin{align}
\eta_{0}(\xi;U)=\xi\cdot v,&\nonumber\\
\eta_{1}(\xi;U)=\xi\cdot v+\frac{1}{\sqrt{2}}\sqrt{r(\rho,\theta)+m(\rho,\theta)},\quad \eta_{2}(\xi;U)&=\xi\cdot v+\frac{1}{\sqrt{2}}\sqrt{r(\rho,\theta)-m(\rho,\theta)},\label{eq:cs0}\\
\eta_{3}(\xi;U)=\xi\cdot v-\frac{1}{\sqrt{2}}\sqrt{r(\rho,\theta)+m(\rho,\theta)},\quad\eta_{4}(\xi;U)&=\xi\cdot v-\frac{1}{\sqrt{2}}\sqrt{r(\rho,\theta)-m(\rho,\theta)},\nonumber
\end{align}
where, for each $\rho,\theta>0$ we have set
\begin{align*}
r(\rho,\theta):=\left(p_{\rho}+\frac{\theta p_{\theta}^{2}}{\rho^{2}e_{\theta}}+\frac{\kappa}{\rho e_{\theta}\tau}\right)\quad\mbox{and}\quad m(\rho,\theta):=\sqrt{\left(p_{\rho}+\frac{\theta p_{\theta}^{2}}{\rho^{2}e_{\theta}}+\frac{\kappa}{\rho e_{\theta}\tau}\right)^{2}-\frac{4p_{\rho}\kappa}{\rho e_{\theta}\tau}}.
\end{align*}
Observe that, by \eqref{eq:CCJpolynomial}, $\eta_{0}(\xi;U)$ is a root of algebraic multiplicity four, and it is easy to see that $\eta_{3}<\eta_{4}<\eta_{2}<\eta_{1}$ (see \cite[Sections 3 and 4.1]{felipe1}). Moreover, this system is Friedrichs symmetrizable \cite[Section 4]{felipe1}, which in turn implies that it is hyperbolic and thus, its Cauchy problem is locally well-posed in $L^{2}$ (see, \cite{katosym}, \cite{otto} and \cite{lax}, for example).
\begin{theo}
\label{ccjroots}
Consider a $(\lambda,\nu)$-quasilinear system for which $\lambda+\nu=0$ on $\mathcal{D}$. Then, the characteristic speeds of such system are real and coincide with the characteristic speeds of the Cattaneo-Christov-Jordan system given in \eqref{eq:cs0}.
\end{theo}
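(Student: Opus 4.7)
The plan is to reduce the characteristic polynomial of the $(\lambda,\nu)$-quasilinear system to the characteristic polynomial of the Cattaneo-Christov-Jordan system by invoking Lemma \ref{hnull}, and then quote the already known roots \eqref{eq:cs0}.

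More concretely, the starting point is the factorization \eqref{eq:59}, which expresses
\[
\det\left(A_{\lambda\nu}(\xi;U)-\eta A^{0}(U)\right)=\rho^{3}\tau^{2}(\xi\cdot v-\eta)^{4}P_{\lambda,\nu}(\xi,U;\eta),
\]
with $P_{\lambda,\nu}$ given by \eqref{eq:510}. The only place where $\lambda$ and $\nu$ enter this polynomial is through the coefficient $h_{\lambda,\nu}(\xi;q)=(Q_{\lambda,\nu}(\xi;q)\xi)\cdot\xi$ of the linear term in $(\xi\cdot v-\eta)$. By hypothesis $\gamma=\lambda+\nu\equiv 0$ on $\mathcal{D}$, so Lemma \ref{hnull} applies and gives $h_{\lambda,\nu}(\xi;q)\equiv 0$ for every $\xi\in\mathbb{S}^{2}$ and $q\in\mathbb{R}^{3}$. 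Substituting this into \eqref{eq:510} collapses $P_{\lambda,\nu}(\xi,U;\eta)$ to exactly the polynomial $P_{0}(\xi,U;\eta)$ appearing in \eqref{eq:realroots}.

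Once the reduction $P_{\lambda,\nu}=P_{0}$ is made, the characteristic determinant equals the Cattaneo-Christov-Jordan determinant \eqref{eq:CCJpolynomial}, so the set of characteristic speeds (with their algebraic multiplicities) coincides. In particular, $\eta_{0}(\xi;U)=\xi\cdot v$ is a root of algebraic multiplicity four coming from the prefactor $(\xi\cdot v-\eta)^{4}$, while the four remaining roots are obtained by solving $P_{0}(\xi,U;\eta)=0$, which is a biquadratic in $z=\xi\cdot v-\eta$ with nonnegative constant term (since $\kappa p_{\rho}>0$ by \textbf{(C2)}); the solutions are precisely $\eta_{1},\dots,\eta_{4}$ from \eqref{eq:cs0}, expressed in terms of the quantities $r(\rho,\theta)$ and $m(\rho,\theta)$ already defined in Section 6. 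This proves both that the speeds are real and that they match those of the Cattaneo-Christov-Jordan system.

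There is no genuine obstacle in this argument: the only non-trivial input is Lemma \ref{hnull}, which has already been established. The remaining work is algebraic bookkeeping — checking that $h_{\lambda,\nu}\equiv 0$ really does eliminate the single $\lambda,\nu$-dependent term in \eqref{eq:510} and that the resulting biquadratic is the same one whose roots are recorded in \eqref{eq:cs0}. Since the 1D case is handled identically (the coefficient $a_{1}$ in \eqref{eq:coefdep} vanishes as soon as $\lambda+\nu=0$, reducing \eqref{eq:dquartic} to a biquadratic with the same structure), the argument could be stated dimension-free if desired, but for the statement at hand it suffices to carry it out in the three-dimensional setting where \eqref{eq:cs0} is formulated.
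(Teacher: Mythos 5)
Your proposal is correct and follows essentially the same route as the paper: invoke Lemma \ref{hnull} under the hypothesis $\lambda+\nu=0$ to get $h_{\lambda,\nu}\equiv 0$, conclude $P_{\lambda,\nu}=P_{0}$ from \eqref{eq:510} and \eqref{eq:realroots}, and compare \eqref{eq:59} with \eqref{eq:CCJpolynomial} to identify the characteristic speeds with \eqref{eq:cs0}. The extra remarks you add (the biquadratic structure, the vanishing of $a_{1}$ in the 1D case) are harmless elaborations of what the paper states more tersely.
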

\begin{proof}
Since $\lambda+\nu=0$, Lemma \ref{hnull} assures that $h_{\lambda,\nu}(\xi;q)=0$ for all $\xi\in\mathbb{S}^{2}$ and $q\in\mathbb{R}^{3}$. Therefore, 
\begin{align*}
P_{\lambda,\nu}(\xi,U;\eta)=P_{0}(\xi,U;\eta)\quad\mbox{for all}\quad\xi\in\mathbb{S}^{2},~U\in\mathbb{R}^{8},~\eta\in\mathbb{R},
\end{align*}
and the result follows by comparing \eqref{eq:59} and \eqref{eq:CCJpolynomial}.
\end{proof}
\begin{rem}
As it was pointed out in \cite{felipe1}, a quasilinear system of the form \eqref{eq:21}, with real characteristic speeds, is not necessarily hyperbolic. Such is the case of the Cattaneo-Christov system, since in this case, $(\lambda,\nu)=(-1,1)$ and so $\lambda+\nu=0$ \textup{(}see \cite[Theorem 3.5]{felipe1}\textup{)}.
\end{rem}
\begin{theo}
Let assumptions \textup{(\textbf{C1})-(\textbf{C3})} be satisfied. Consider a $(\lambda,\nu)$-quasilinear system \eqref{eq:21} with the property that $\lambda+\nu=0$ on $\mathcal{D}$. This system is hyperbolic if and only if $(\lambda,\nu)=(1,-1)$.
\end{theo}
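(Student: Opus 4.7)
The plan is to use Theorem \ref{ccjroots} to identify the complete set of characteristic speeds and then reduce the question to a single dimension count at the one degenerate eigenvalue. When $\lambda+\nu=0$, the eight roots of $\det(A_{\lambda,\nu}(\xi;U)-\eta A^{0})=0$ are $\eta_{0}=\xi\cdot v$, of algebraic multiplicity four, together with the four simple roots $\eta_{1},\ldots,\eta_{4}$ of \eqref{eq:cs0}. Assumption (\textbf{C2}) guarantees $p_{\rho}\kappa/(\rho e_{\theta}\tau)>0$ and $p_{\theta}^{2}\theta/(\rho^{2}e_{\theta})>0$, from which one verifies $r\pm m>0$, so $\eta_{1},\ldots,\eta_{4}$ are real and strictly separated from one another and from $\eta_{0}$. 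Each simple root therefore contributes a one-dimensional eigenspace automatically, and hyperbolicity is equivalent to the statement
\begin{align*}
\dim\ker\bigl(A_{\lambda,\nu}(\xi;U)-(\xi\cdot v)A^{0}(U)\bigr)=4\qquad\text{for every }\xi\in\mathbb{S}^{2},\ U\in\mathcal{O}.
\end{align*}

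Writing $V=(V_{1},V_{v},V_{5},V_{q})^{\top}$ with $V_{v},V_{q}\in\mathbb{R}^{3}$, I would expand the kernel equation using the block structure of $A^{0}$ and $A_{\lambda,\nu}$ to obtain the four conditions
\begin{align*}
p_{\rho}V_{1}+p_{\theta}V_{5}=0,\qquad \xi\cdot V_{v}=0,\qquad \xi\cdot V_{q}=0,\qquad \tau\mathcal{Q}_{\lambda,\nu}(\xi;q)V_{v}+\kappa V_{5}\,\xi=0.
\end{align*}
The variable $V_{q}$ contributes two free dimensions via the transversality condition and does not enter the last equation. The technical heart of the proof, and the step I expect to be the most delicate, is a direct expansion of $\mathcal{Q}_{\lambda,-\lambda}(\xi;q)V_{v}$ from \eqref{eq:23} under $\gamma=0$ and $\xi\cdot V_{v}=0$ to establish
\begin{align*}
\mathcal{Q}_{\lambda,-\lambda}(\xi;q)V_{v}=\lambda^{-}(\xi\cdot q)V_{v}+\lambda^{+}(q\cdot V_{v})\,\xi.
\end{align*}
Decomposing the last kernel equation into its $\xi^{\perp}$- and $\xi$-components then yields $\tau\lambda^{-}(\xi\cdot q)V_{v}=0$ and $\tau\lambda^{+}(q\cdot V_{v})+\kappa V_{5}=0$, respectively.

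With that identity in hand the two directions follow from a case split on $\lambda^{-}$. For sufficiency, when $(\lambda,\nu)=(1,-1)$ we have $\lambda^{-}=0$ and $\lambda^{+}=1$, so the $\xi^{\perp}$-equation is vacuous and only the scalar constraint $\tau(q\cdot V_{v})=\kappa(p_{\rho}/p_{\theta})V_{1}$ survives, cutting exactly one dimension from the five-dimensional space $\{(V_{1},V_{v},V_{q}):\xi\cdot V_{v}=\xi\cdot V_{q}=0\}$; together with the four simple roots this yields eight linearly independent eigenvectors at every $(\xi,U)$. For necessity, suppose $(\rho^{\ast},\theta^{\ast})\in\mathcal{D}$ satisfies $\lambda(\rho^{\ast},\theta^{\ast})\neq 1$, so $\lambda^{-}(\rho^{\ast},\theta^{\ast})\neq 0$. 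I fix any $q\in\mathbb{R}^{3}\setminus\{0\}$, set $\xi=q/|q|$ so that $\xi\cdot q=|q|\neq 0$, and let $U^{\ast}=(\rho^{\ast},v,\theta^{\ast},q)$ for arbitrary $v\in\mathbb{R}^{3}$; the $\xi^{\perp}$-component forces $V_{v}=0$, the $\xi$-component then gives $V_{5}=0$, and (\textbf{C2}) forces $V_{1}=0$. The kernel collapses to $\{V_{q}\in\xi^{\perp}\}$, only two-dimensional, which lies strictly below the algebraic multiplicity four and rules out hyperbolicity at $U^{\ast}$.
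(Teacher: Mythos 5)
Your proposal is correct, and it shares the paper's skeleton --- Theorem \ref{ccjroots} reduces everything to the geometric multiplicity of $\eta_{0}=\xi\cdot v$, and you arrive at exactly the paper's four kernel conditions \eqref{eq:hyp1}--\eqref{eq:hyp4} --- but the technical core is handled by a genuinely different and cleaner device. The paper works in coordinates: it picks explicit bases of $\{\xi\}^{\perp}$ in three separate cases ($\xi_{1}\neq0$, $\xi_{2}\neq0$, $\xi=(0,0,1)$), grinds out the component equations \eqref{eq:hyp8}--\eqref{eq:hyp10}, and for necessity must choose a wavenumber $\overline{\xi}$ with all three components nonzero. Your coordinate-free identity is correct and does the same work uniformly: a direct expansion of \eqref{eq:23} using $\gamma=\lambda^{+}+\lambda^{-}+\nu$ gives, for arbitrary $w\in\mathbb{R}^{3}$,
\begin{equation*}
\mathcal{Q}_{\lambda,\nu}(\xi;q)\,w=\lambda^{+}(q\cdot w)\,\xi+\lambda^{-}(\xi\cdot q)\,w+\nu(\xi\cdot w)\,q,
\end{equation*}
so your formula follows for $w\perp\xi$ without even using $\gamma=0$ (and it reproves Lemma \ref{hnull} at a stroke, since $h_{\lambda,\nu}=\gamma\,\xi\cdot q$ on $\mathbb{S}^{2}$). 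The tangential/normal split of \eqref{eq:hyp4} then isolates the obstruction as the single scalar $\lambda^{-}(\xi\cdot q)$, making both directions uniform in $\xi$: sufficiency becomes a transparent dimension count (your constraint does cut exactly one dimension, since its $V_{1}$-coefficient $\kappa p_{\rho}/p_{\theta}$ is nonzero by (\textbf{C2}) even when $q=0$), and necessity works with $\xi=q/|q|$ for any $q\neq0$ rather than a specially chosen wavenumber; your preliminary checks (reality and strict separation of $\eta_{1},\dots,\eta_{4}$ via $r\pm m>0$, simple roots contributing one-dimensional eigenspaces of the pencil with $A^{0}$ nonsingular) are also sound. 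What the paper's computation buys in exchange is the explicit eigenvector families \eqref{eq:hypvectors} and \eqref{eq:hypvectors2}, exhibited concretely for each configuration of $\xi$, whereas your argument certifies the kernel dimension more elegantly but leaves the eigenbasis implicit.
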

\begin{proof}
Assume that $\lambda+\nu=0$ on $\mathcal{D}$. According with the definition \ref{hyperbolic}, we have to show that the only values of $(\lambda,\nu)$ for which the eigenvalue problem \eqref{eq:hypdet} has a complete set of eigenvectors are $(\lambda,\nu)=(-1,1)$. Since $\lambda+\nu=0$, Theorem \ref{ccjroots} assures that the eigenvalues of the $(\lambda,\nu)$-quasilinear system are given by \eqref{eq:cs0}. Let $\{V_{j}\}_{j=1}^{4}$ be the eigenvectors associated with the eigenvalues $\{\eta_{j}\}_{j=1}^{4}$, respectively. By \textbf{(C1)}-\textbf{(C2)}, these eigenvalues are different and so, the set of eigenvectors $\{V_{j}\}_{j=1}^{4}$ is linearly independent. Since $\eta_{0}(\xi;U)$ is different from the other roots, the $(\lambda,\nu)$-quasilinear system will be hyperbolic if and only if, for every $\xi\in\mathbb{S}^{2}$ and $U\in\mathcal{O}$, the geometric multiplicity of $\eta_{0}(\xi;U)$ equals four. Let $\eta=\eta_{0}$ and $A(\xi; U)=A_{\lambda,\nu}(\xi;U)$ in \eqref{eq:hypdet} and set $V=(V_{1},V_{2},..,V_{8})^{\top}$ to obtain the algebraic system of equations, 
\begin{align}
\xi\cdot V^{\prime}&=0,\label{eq:hyp1}\\
p_{\rho}V_{1}+p_{\theta}V_{5}&=0,\label{eq:hyp2}\\
\xi\cdot V^{\prime\prime}&=0,\label{eq:hyp3}\\
\tau\mathcal{Q}_{\lambda,\nu}(q;\xi)V^{\prime}+\kappa V_{5}\xi&=0,\label{eq:hyp4}
\end{align}
where $V^{\prime}=(V_{2},V_{3},V_{4})^{\top}$ and $V^{\prime\prime}=(V_{6},V_{7},V_{8})^{\top}$. From \eqref{eq:hyp3}, it follows the existence of exactly two linearly independent solutions, say $(V_{6}^{1},V_{7}^{1},V_{8}^{1})^{\top}$ and $(V_{6}^{2},V_{7}^{2},V_{7}^{2})^{\top}$. Then, we can take the vectors
\begin{align}
(0,0,0,0,0,V_{6}^{i},V_{7}^{i},V_{8}^{i})^{\top},\quad\mbox{for}~i=1,2,\label{eq:hypvectors}
\end{align}
as two linearly independent solutions of \eqref{eq:hyp1}-\eqref{eq:hyp4}. Therefore, in order to comply with the hyperbolicity, equations \eqref{eq:hyp1}, \eqref{eq:hyp2} and \eqref{eq:hyp4}, must have two more linearly independent solutions for each $\xi\in\mathbb{S}^{2}$ and $U=(\rho,v,\theta,q)\in\mathcal{O}$. Observe that, for each $\xi\in\mathbb{S}^{2}$, \eqref{eq:hyp1} implies that $V^{\prime}\in\{\xi\}^{\perp}$, a two dimensional space. Hence, if $\{V_{\xi},W_{\xi}\}$ is a basis of $\{\xi\}^{\perp}$, $V^{\prime}$ is of the form
\begin{align}
\label{eq:hyp5}
V^{\prime}=\alpha_{1}V_{\xi}+\alpha_{2}W_{\xi},\quad\mbox{for some}~\alpha_{1},\alpha_{2}\in\mathbb{R}.
\end{align}
If we multiply \eqref{eq:hyp4} by $\xi$ and use \eqref{eq:hyp5} we obtain that
\begin{align}
\label{eq:hyp6}
V_{5}=-\frac{\tau}{\kappa}\left\lbrace\alpha_{1}\left(\mathcal{Q}_{\lambda,\nu}(q;\xi)V_{\xi}\right)\cdot\xi+\alpha_{2}\left(\mathcal{Q}_{\lambda,\nu}(q;\xi)W_{\xi}\right)\cdot\xi\right\rbrace\quad\mbox{for each}~\xi\in\mathbb{S}^{2}.
\end{align}
Let $\xi=(\xi_{1},\xi_{2},\xi_{3})\in\mathbb{S}^{2}$ and assume that $\xi_{1}\neq 0$. Then, the vectors
\begin{align*}
V_{\xi}=\left(-\xi_{2}, \xi_{1}, 0\right)^{\top}\quad\mbox{and}\quad W_{\xi}=\left(-\xi_{3}, 0, \xi_{1}\right)^{\top}
\end{align*}
are linearly independent and form a basis of $\{\xi\}^{\perp}$. Hence, by \eqref{eq:hyp6} 
\begin{align}
\label{eq:hyp7}
V_{5}=-\frac{\tau}{\kappa}\lambda^{+}\left\lbrace\alpha_{1}\left(\xi_{1}q_{2}-\xi_{2}q_{1}\right)+\alpha_{2}\left(\xi_{1}q_{3}-\xi_{3}q_{1}\right)\right\rbrace\quad\mbox{for any}~q=(q_{1},q_{2},q_{3})\in\mathbb{R}^{3}.
\end{align}
By using \eqref{eq:hyp5} and \eqref{eq:hyp7} into \eqref{eq:hyp4} we obtain the equations
\begin{align}
\alpha_{1}\left[-\lambda^{-}(\xi_{2}^{2}q_{2}+\xi_{2}\xi_{3}q_{3})+(\nu+\lambda^{+})\xi_{1}\xi_{2}q_{1}\right]&+\alpha_{2}\left[-\lambda^{-}(\xi_{2}\xi_{3}q_{2}+\xi_{3}^{2}q_{3})+(\nu+\lambda^{+})\xi_{1}\xi_{3}q_{1}\right]=0,\label{eq:hyp8}\\
\alpha_{1}\left[-(\nu+\lambda^{+})\xi_{1}\xi_{2}q_{2}\right.&+\left.\lambda^{-}(\xi_{1}^{2}q_{1}+\xi_{1}\xi_{3}q_{3})\right]=0,\label{eq:hyp9}\\
\alpha_{2}\left[-(\nu+\lambda^{+})\xi_{1}\xi_{3}q_{3}\right.&+\left.\lambda^{-}(\xi_{1}^{2}q_{1}+\xi_{1}\xi_{2}q_{2})\right]=0,\label{eq:hyp10}
\end{align}
valid for any $\xi\in\mathbb{S}^{2}$ with $\xi_{1}\neq 0$. Take $\overline{\xi}=(\overline{\xi}_{1},\overline{\xi}_{2},\overline{\xi}_{3})\in\mathbb{S}^{2}$ such that $\overline{\xi}_{i}\neq 0$ for all $i=1,2,3$ and $\overline{q}\in\mathbb{R}^{3}$ with the property that $\overline{\xi}\cdot\overline{q}\neq 0$. Assume the existence of $(\rho^{\prime},\theta^{\prime})\in\mathcal{D}$ such that $(\lambda,\nu)(\rho^{\prime},\theta^{\prime})\neq(1,-1)$. Then, $\left(\nu+\lambda^{+}\right)(\rho^{\prime},\theta^{\prime})=-\lambda^{-}(\rho^{\prime},\theta^{\prime})\neq 0$ and \eqref{eq:hyp8}-\eqref{eq:hyp10} can be rewritten as 
\begin{align*}
\alpha_{1}\overline{\xi}_{2}\left(\overline{\xi}\cdot\overline{q}\right)=-\alpha_{2}\overline{\xi}_{3}\left(\overline{\xi}\cdot\overline{q}\right),\quad
\alpha_{1}\overline{\xi}_{1}\left(\overline{\xi}\cdot\overline{q}\right)=0,\quad
\alpha_{2}\overline{\xi}_{1}\left(\overline{\xi}\cdot\overline{q}\right)=0.
\end{align*}
By the particular choices of $\overline{\xi}$ and $\overline{q}$, it follows that $\alpha_{1}=\alpha_{2}=0$ and thus, $V^{\prime}=0$. This implies that $V_{1}=V_{5}=0$. Therefore, if $(\lambda,\nu)\neq(1,-1)$ we can find wavenumbers ($\overline{\xi}$) and states ($\overline{U}=(\rho^{\prime},v,\theta^{\prime},\overline{q})^{\top}\in\mathcal{O}$) such that the geometric multiplicity of the eigenvalue $\eta_{0}(\overline{\xi};\overline{U})$ equals two. This means that the $(\lambda,\nu)$-quasilinear system is not hyperbolic if $(\lambda,\nu)\neq(1,-1)$.\\
Now assume that $(\lambda,\nu)=(1,-1)$ on $\mathcal{D}$. If $\xi=(\xi_{1},\xi_{2},\xi_{3})\in\mathbb{S}^{2}$ is such that $\xi_{1}\neq 0$, $V_{5}$ is given by \eqref{eq:hyp7} and \eqref{eq:hyp4} is equivalent to equations \eqref{eq:hyp8}-\eqref{eq:hyp10}. Moreover, since $\lambda^{+}=1$ and $\lambda^{-}=0$, equations \eqref{eq:hyp8}-\eqref{eq:hyp10} are trivially satisfied for any choice of $\alpha_{1},\alpha_{2}\in\mathbb{R}$. In particular, this means that $V_{\xi}$ and $W_{\xi}$ are linearly independent solutions of equations \eqref{eq:hyp1} and \eqref{eq:hyp4}, where $V_{5}$ is given by \eqref{eq:hyp7}. Consequently, the vectors 
\begin{align}
\mathcal{V}(\xi)=\left(-\frac{p_{\theta}}{p_{\rho}}V_{5}, V_{\xi}, V_{5},0,0,0\right)^{\top},\quad \mathcal{W}(\xi)=\left(-\frac{p_{\theta}}{p_{\rho}}V_{5}, W_{\xi}, V_{5},0,0,0\right)^{\top},\label{eq:hypvectors2}
\end{align}
are solutions of equations \eqref{eq:hyp1}-\eqref{eq:hyp4} and together with \eqref{eq:hypvectors} form a linearly independent set. Therefore, the geometric multiplicity of $\eta_{0}(\xi;U)$ equals four, for any $\xi\in\mathbb{S}^{2}$ with $\xi_{1}\neq 0$ and $U\in\mathcal{O}$. If, on the other hand, $\xi\in\mathbb{S}^{2}$ is such that $\xi_{2}\neq 0$, we take the linearly independent vectors,
\begin{align*}
V_{\xi}=\left(\xi_{2}, -\xi_{1}, 0\right)^{\top}\quad\mbox{and}\quad W_{\xi}=\left(0, -\xi_{3}, \xi_{2}\right)^{\top},
\end{align*}
as solutions of \eqref{eq:hyp1}. In this case, 
\begin{align*}
V_{5}=-\frac{\tau}{\kappa}\left\lbrace\alpha_{1}\left(\xi_{2}q_{1}-\xi_{1}q_{2}\right)+\alpha_{2}\left(\xi_{2}q_{3}-\xi_{3}q_{2}\right)\right\rbrace\quad\mbox{for any}~q=(q_{1},q_{2},q_{3})\in\mathbb{R}^{3}.
\end{align*}
By setting $\mathcal{V}(\xi)$ and $\mathcal{W}(\xi)$ as in \eqref{eq:hypvectors2}, the conclusion follows. Finally, if $\xi=(0,0,1)$ take the canonical vectors $\hat{e}_{1}$ and $\hat{e}_{2}$ as solutions of \eqref{eq:hyp4}, that is, $V_{\xi}=\hat{e}_{1}$ and $W_{\xi}=\hat{e}_{2}$. Then, $V_{5}=-\frac{\tau}{\kappa}\left(\alpha_{1}q_{1}+\alpha_{2}q_{2}\right)$ and we proceed as before. Therefore, if $(\lambda,\nu)=(1,-1)$, the geometric multiplicity of the eigenvalue $\eta_{0}(\xi,U)$ equals four for any choice of $\xi\in\mathbb{S}^{2}$ and $U\in\mathcal{O}$ and thus, the $(1,-1)$-quasilinear system is hyperbolic.
\end{proof}
\section{Final comments and conclusions}
In this work we have shown that the only case in which the coupling between \eqref{eq:1}-\eqref{eq:3} and the frame indifferent formulation for the heat flux \eqref{eq:objder} yields a hyperbolic system of equations is when we take $(\lambda,\nu)$ as the constant value functions $(1,-1)$ on $\mathcal{D}$. In particular, this implies that for a constant state $V_{c}\in\mathcal{O}$, the Cauchy problem for the linear equation
\begin{equation*}
A^{0}(V_{c})U_{t}+A^{i}_{1,-1}(V_{c})\partial_{i}U+Q(V_{c})=0,
\end{equation*}
is well posed in $L^{2}$, as consequence of the existence of energy estimates \cite[Theorem 3.1.2]{serre}. Moreover, if $V_{c}\in\mathcal{O}$ is taken as constant or constant outside a compact set and since the characteristic speeds of this model (see Theorem \ref{ccjroots}) have constant algebraic multiplicity with respect to $\xi\in\mathbb{S}^{2}$, the finite speed of propagation holds true \cite[Theorem 2.11]{benzoni}. Now, for the general $(1,-1)$-quasilinear system \eqref{eq:21}, although its hyperbolicity is necessary for the existence of $L^{2}$ energy estimates, it is not sufficient \cite{gstran}. Typically, when the quasilinear system \eqref{eq:21} is derived from a system of conservation laws, the existence of a convex entropy is equivalent to the existence of a symmetrizer (as the Hessian of the entropy) (cf. \cite{daf}, \cite{serre}). Given the non-conservative structure of the $(1,-1)$-heat flux model, it is unknown for the author if a Friedrichs symmetrizer exists for this quasilinear system of equations in more than one space dimensions. Due to the presence of the skew matrix $\mathcal{Q}_{1,-1}(\xi;q)$, a diagonal symmetrizer as in the Cattaneo-Christov-Jordan system, doesn't work for this case when $d=3$.\\
By proceeding as in \cite[Section 3.1]{christov}, it is easy to show that the $(1,-1)$-heat flux model is \emph{irreducible} in several space dimensions, in the sense that, together with the material invariant form of the internal energy equation, it is impossible to derive a single equation for the temperature field $\theta$. Observe that the one dimensional version of this model coincides with the one dimensional Cattaneo-Christov-Jordan system. Therefore, it is locally well-posed in $L^{2}$ and satisfies the finite speed of propagation property (see \cite{katosym} and \cite[Theorem 2.11]{benzoni}.
\section*{Acknowledgments}
Thanks to Angelo Morro for clarifying some aspects of objective rates. This work was supported by CONACyT (Mexico) through a postdoctoral fellowship under grant A1-S-10457.

\bibliographystyle{plain} 
\bibliography{feVSkawabiblio}

\end{document}